\renewcommand\section{\@startsection%
{section}{1}{0mm}%
{2\baselineskip}%
{\baselineskip}%
{\normalfont \bfseries}}
\newtheorem{theorem}{Theorem}
\begin{document}

\author{Abdulrahman Ali Abdulaziz \\
\small{Assistant Professor} \\
\small{University of Balamand, P. O. Box 100, Tripoli, Lebanon} \\
\small{\tt abdul@balamand.edu.lb}} 
\title{\large \bf Integer Sequences of the Form $\alpha^n \pm \beta^n$} 
\date{}
\maketitle
\thispagestyle{empty}

\abstract{
Suppose that we want to find all integer sequences of the form $\alpha^n +
\beta^n$, where $\alpha$ and $\beta$ are complex numbers and $n$ is a
nonnegative integer. Since $\alpha^0 + \beta^0$ is always an integer, our
task is then equivalent to determining all complex pairs $(\alpha,\beta)$
such that
\begin{equation}
\alpha^n + \beta^n \in \mathbb Z, \quad n > 0. \label{Main:Cond}
\end{equation}
Let $p$ and $q$ be two integers; and consider the quadratic equation
\begin{equation}
z^2 - pz -q = 0. \label{Main:Eqn}
\end{equation}
By the quadratic formula, the roots of (\ref{Main:Eqn}) are
\begin{equation}
r = \frac{p}{2} + \frac{\sqrt{p^2 + 4q}}{2} \qquad \text{and} \qquad s =
\frac{p}{2} - \frac{\sqrt{p^2 + 4q}}{2}. \label{Main:Roots}
\end{equation}
In this paper, we prove that there is a correspondence between the roots of
(\ref{Main:Eqn}) and integer sequences of the form $\alpha^n + \beta^n$. In
addition, we will show that there are no integer sequences of the form
$\alpha^n - \beta^n$. Finally, we use special values of $\alpha$ and $\beta$
to obtain a range of formulas involving Lucas and Fibonacci numbers.
}

\section{Sums of Like Powers}
In this section, we prove that the complex pair $(\alpha,\beta)$ satisfies
(\ref{Main:Cond}) if and only if $\alpha$ and $\beta$ are the roots of
(\ref{Main:Eqn}).

\begin{theorem} \label{Main:Thm}
  If $r$ and $s$ are the roots of (\ref{Main:Eqn}), then $r^n + s^n$ is an
  integer for every natural number $n$.
\end{theorem}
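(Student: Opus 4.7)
The plan is to set $L_n := r^n + s^n$ and prove the stronger statement that the sequence $\{L_n\}_{n \geq 0}$ satisfies a linear recurrence with integer coefficients, so that integrality propagates from the base cases by induction.

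First I would compute the two elementary symmetric functions of $r$ and $s$ directly from the formulas in (\ref{Main:Roots}), or equivalently read them off Vieta's relations for (\ref{Main:Eqn}): namely $r+s=p$ and $rs=-q$, both integers by hypothesis. Next I would establish the key identity
\[
L_{n+1} = (r+s)(r^n+s^n) - rs\,(r^{n-1}+s^{n-1}) = p\,L_n + q\,L_{n-1},
\]
valid for every $n\geq 1$, which follows by expanding $(r+s)(r^n+s^n)$ and subtracting $rs(r^{n-1}+s^{n-1})$.

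With the recurrence in hand, the proof is completed by strong induction on $n$. The base cases are $L_0 = r^0+s^0 = 2 \in \mathbb{Z}$ and $L_1 = r+s = p \in \mathbb{Z}$. Assuming $L_{n-1},L_n\in\mathbb{Z}$, the recurrence forces $L_{n+1}=pL_n+qL_{n-1}\in\mathbb{Z}$, closing the induction.

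There is no real obstacle here; the only mild subtlety is that one must not try to argue integrality of $r^n+s^n$ directly from the surd expression $\tfrac{p}{2}\pm\tfrac{1}{2}\sqrt{p^2+4q}$, since when $p^2+4q$ is negative or a non-square the individual powers $r^n,s^n$ are genuinely non-integer (indeed non-real), and the integrality emerges only after adding the two conjugate quantities. Packaging the argument through the symmetric recurrence $L_{n+1}=pL_n+qL_{n-1}$ sidesteps this and keeps the reasoning entirely within $\mathbb{Z}$.
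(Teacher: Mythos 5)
Your proof is correct, but it takes a genuinely different route from the paper. You use Vieta's relations $r+s=p$, $rs=-q$ to derive the recurrence $L_{n+1}=pL_n+qL_{n-1}$ and then induct from the integer base cases $L_0=2$, $L_1=p$; the paper instead expands $r^n+s^n$ directly by the binomial theorem, collects the surviving even-index terms, and reduces the claim to the binomial identity
\[
2^{2k-n+1}\sum_{i=k}^{\lfloor n/2\rfloor}\binom{n}{2i}\binom{i}{k}=\frac{n}{n-k}\binom{n-k}{k},
\]
whose right-hand side is manifestly an integer. The paper is aware of the inductive route --- a footnote credits Draim and Bickell with exactly such a proof --- but deliberately chooses the direct computation because its intermediate steps produce the explicit closed form $r^n+s^n=\sum_{k=0}^{\lfloor n/2\rfloor}\frac{n}{n-k}\binom{n-k}{k}p^{n-2k}q^k$ and the identities (\ref{Exp1}), (\ref{IdenL}), (\ref{Spec1}), (\ref{Spec2}), all of which are mined in Section 3 for Lucas and Fibonacci formulas. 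Your argument is shorter, entirely elementary, and avoids any appeal to a computer algebra system, but it establishes only the integrality, not the explicit polynomial expression in $p$ and $q$ that the rest of the paper relies on.
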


\begin{proof}
  By the binomial theorem, we have
\begin{align*}
  r^n + s^n& = \left(\frac{p}{2} + \frac{\sqrt{p^2 + 4q}}{2} \right)^n +
  \left(\frac{p}{2} - \frac{\sqrt{p^2 + 4q}}{2} \right)^n \\
  & = \frac{1}{2^n} \sum_{i=0}^n \binom{n}{i} p^{n-i} \left(
    \left(\sqrt{p^2+4q}\right)^i + \left(-\sqrt{p^2+4q}\right)^i \right).
\end{align*}
Let $\Delta =\sqrt{p^2+4q}$. Then $\Delta^i + (-\Delta)^i = 0$ for odd values
of $i$ and $\Delta^i + (-\Delta)^i = 2\Delta^i$ for even values of $i$.
Hence, replacing $i$ by $2i$ in the last summation, the upper limit for $i$
becomes $\lfloor n/2 \rfloor$. This yields
\[
  r^n+s^n  = \frac{1}{2^n} \sum_{i=0}^{\lfloor n/2 \rfloor} \binom{n}{2i}
  p^{n-2i} \left(2\Delta^{2i}\right),
\]
which is equivalent to 
\begin{equation}
r^n+s^n = \frac{1}{2^{n-1}} \sum_{i=0}^{\lfloor n/2 \rfloor} \binom{n}{2i}
  p^{n-2i} \, (p^2+4q)^i. \label{Exp1}
\end{equation}
Now expanding $(p^2+4q)^i$ in the last equation, we obtain
\begin{align*}
  r^n+s^n& = \frac{1}{2^{n-1}} \sum_{i=0}^{\lfloor n/2 \rfloor} \binom{n}{2i}
  p^{n-2i} \sum_{k=0}^i \binom{i}{k}(p^2)^{i-k}(4q)^k
  \\
  &= \frac{1}{2^{n-1}} \sum_{i=0}^{\lfloor n/2 \rfloor} \binom{n}{2i}
  p^{n-2i} \sum_{k=0}^i \binom{i}{k} p^{2i-2k} 2^{2k} q^k 
\\
&= \frac{1}{2^{n-1}} \sum_{i=0}^{\lfloor n/2 \rfloor} \binom{n}{2i}
  \sum_{k=0}^i \binom{i}{k} 2^{2k} p^{n-2k}q^k.
\end{align*}
Since $\binom{n}{2i}$ is multiplied by $\binom{i}{k}$ for $k \leq i \leq
\lfloor n/2 \rfloor$, the last summation can be rearranged so that the
coefficient of $p^{n-2k}q^k$ is
\begin{equation}
\frac{1}{2^{n-2k-1}} \, \sum_{i=k}^{\lfloor n/2 \rfloor} \binom{n}{2i}
\binom{i}{k},  \label{Coeff}
\end{equation}
where $0 \leq k \leq \lfloor n/2 \rfloor$. It follows that
\[
r^n+s^n = \sum_{k=0}^{\lfloor n/2 \rfloor} \frac{1}{2^{n-2k-1}}
\sum_{i=k}^{\lfloor n/2 \rfloor} \binom{n}{2i} \binom{i}{k} p^{n-2k}q^k. 
\]
Since $p$ and $q$ are integers, the proof would be completed provided one can
show that (\ref{Coeff}) yields only integer values. We do so by showing that 
\begin{equation}
2^{2k-n+1} \, \sum_{i=k}^{\lfloor n/2 \rfloor} \binom{n}{2i}
\binom{i}{k} = \frac{n}{n-k} \binom{n-k}{k}, \label{IdenL}
\end{equation} 
where it is clear that the right-hand side of (\ref{IdenL}) is always an
integer. Observe that once (\ref{IdenL}) is proved, we can write
\begin{equation}
r^n+s^n = \sum_{k=0}^{\lfloor n/2 \rfloor} \frac{n}{n-k} \binom{n-k}{k}
p^{n-2k}q^k. \label{Exp2}
\end{equation}

An easy way to prove (\ref{IdenL}) is through the help of a computer algebra
system. For example, the answer to the left-hand side of (\ref{IdenL}) 
given by \emph{Mathematica~5.2} is 
\[
\frac{n\Gamma(n-k)}{\Gamma(k+1)\Gamma(n-2k+1)},
\]
where $\Gamma$ is the well known generalized factorial function. Using the
fact that $\Gamma(n) = (n-1)!$ for the positive integer $n$, the answer can
be written as
\[
\frac{n(n-k-1)!}{k!(n-2k)!} = \frac{n}{n-k} \left(\frac{(n-k)!}{k!(n-2k)!}
\right) = \frac{n}{n-k} \binom{n-k}{k},
\]
and so the proof is complete\footnote{\citet*{Draim} proved Theorem
  \ref{Main:Thm} using mathematical induction. However, the direct proof
  given here has the advantage that its steps, as we shall see, can be used
  to calculate some interesting binomial sums.}.
\end{proof}

Equation (\ref{IdenL}) can be proved and thus written in many other ways. For
example, Draim and Bickell \citet*{Draim} proved that
\begin{equation}
r^n + s^n = \sum_{k=0}^{\lfloor n/2 \rfloor} \left( 2 \binom{n-k}{k} -
\binom{n-k-1}{k} \right) p^{n-2k}q^k. \label{Exp3}
\end{equation}
Then using properties of binomial coefficients \citet*{Koshy} showed that the
coefficient of $p^{n-2k}q^k$ in (\ref{Exp3}) is
\[
\binom{n-k}{k} + \binom{n-k-1}{k-1} = \frac{n}{n-k} \binom{n-k}{k}. 
\]
Similar results are proved in \citep*{Benoum,Woko}.  More generally,
\citet*{Hirsh} used summable hypergeometric series to prove, among 
other identities, that
\[
\sum_{i=k}^{\lfloor n/2 \rfloor} \binom{n}{2i} \binom{i}{k} = 2^{n-2k-1}
\left( \binom{n-k}{k} + \binom{n-k-1}{k-1} \right).
\]
In fact, (\ref{IdenL}) is one of a whole class of identities involving
hypergeometric series that can be proved by means of well established
algorithms; see \citet*{Petkov} for a survey of such algorithms. Lastly, if $x
= \sqrt{-p^2/q}$, then it can be shown that
\begin{equation}
r^n + s^n = 2p^n x^{-n} T_n(x/2), \label{Cheby1}
\end{equation}
where $T_n(x)$ is Chebyshev polynomial of the first kind of order $n$ defined
by the recurrence relation 
\[
T_0(x) = 1, \quad T_1(x) = x, \quad \text{and} \quad T_{n+1}(x) = 2xT_n(x) -
T_{n-1}(x). 
\]

The converse of Theorem \ref{Main:Thm} is also true. That is, if $\alpha^n +
\beta^n$ is an integer for every natural number $n$, then $\alpha$ and
$\beta$ are the zeros of a quadratic polynomial with integer coefficients.
Since $\alpha$ and $\beta$ are the roots of the equation
\[
z^2 -(\alpha+\beta)z + \alpha\beta = 0
\] 
and since $\alpha + \beta$ is an integer, the proof would be completed
provided one can show that $\alpha\beta$ is also an integer. But we know that
$2\alpha \beta$ is an integer since
\[
2\alpha \beta = (\alpha + \beta)^2 - (\alpha^2 + \beta^2) \in \mathbb Z. 
\]
It follows that if $\alpha\beta$ is not an integer, then $\alpha\beta = m/2$
for some odd integer $m$. Now the fact that $\alpha^4 + \beta^4$ is an
integer implies that
\[
2\alpha \beta(3\alpha \beta + 2\alpha^2 + 2\beta^2) = (\alpha +
\beta)^4 - (\alpha^4 + \beta^4) \in \mathbb Z.
\] 
But this could not hold unless $6\alpha^2 \beta^2 = 3m^2/2$ is an integer,
which is impossible when $m$ is odd. We conclude that $\alpha \beta$ must be
an integer, as required.

Finally, observe that if $m$ is an nonzero integer, then $\alpha = m+r$ and
$\beta = m+s$ are the roots of the equation
\[
z^2 - (p+2m)z - (q-pm-m^2) = 0. 
\]
Therefore, $\alpha^n + \beta^n$ is an integer for every positive integer $n$.
By the binomial theorem, we have
\begin{equation}
\alpha^n + \beta^n = (m+r)^n + (m+s)^n = \sum_{i=0}^n \binom{n}{i} m^{n-i}
(r^i + s^i) \in \mathbb Z. \label{Main:Sum} 
\end{equation}

\section{Differences of Like Powers}
Let $\alpha = x+iy$ and $\beta = u+iv$ be two complex numbers. Apart from the
trivial case $\alpha = \beta$ and the case when both $\alpha$ and $\beta$ are
integers, we will show that there are no integer sequences of the form
$\alpha^n - \beta^n$. To see this, observe that if $\alpha - \beta$ is an
integer then $v=y$, and so
\[
\alpha^2 - \beta^2 = (x^2-u^2) + 2y(x-u)i.
\]
It follows that $2y(x-u)=0$, i.e., either $y=0$ or $u=x$. But for $y=0$,
we get real $\alpha$ and $\beta$; and for $u=x$, we get the trivial solution
$\alpha = \beta$.

Now suppose that $\alpha$ and $\beta$ are real numbers such that $\alpha^n -
\beta^n$ is always an integer. Assuming that $\alpha$ and $\beta$ are not
both integers, then the fact that $\alpha - \beta$ is an integer implies that
there exists a real number $a$ and integers $x$ and $y$ such that $\alpha =
x+a$ and $\beta = y+a$. It follows that
\[
\alpha^2 - \beta^2 = x^2 - y^2 + 2a(x-y) \in \mathbb Z 
\]
only if $a$ is a rational number. Clearly, $a$ should not be an integer since
otherwise $\alpha$ and $\beta$ will be both integers. But if $a$ is a
noninteger rational number, then the same is true for $\alpha = x+a$ and
$\beta=y+a$.
\begin{theorem}
  If $\alpha$ and $\beta$ are two distinct (noninteger) rational numbers,
  then $\alpha^n - \beta^n$ cannot be an integer for every natural number
  $n$. 
\end{theorem}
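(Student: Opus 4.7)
The plan is to assume for contradiction that $\alpha^n - \beta^n \in \mathbb Z$ for every natural $n$, reduce to an integer divisibility problem by clearing denominators, and then extract a contradiction by examining the $p$-adic valuation for any prime $p$ dividing the common denominator.

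First, the case $n=1$ forces $\alpha - \beta \in \mathbb Z$. Writing $\alpha = A/q$ in lowest terms, with $q > 1$ since $\alpha$ is noninteger and $\gcd(A,q)=1$, the integrality of $\alpha - \beta$ forces $\beta = B/q$ for some integer $B$ with $\gcd(B,q)=1$ and $q \mid A-B$ (the same denominator, since subtracting an integer from $A/q$ leaves the denominator unchanged in lowest terms). The hypothesis then becomes the purely integer statement $q^n \mid A^n - B^n$ for every $n \geq 1$.

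Next I would fix any prime divisor $p$ of $q$ and let $e = v_p(q) \geq 1$, where $v_p$ denotes the $p$-adic valuation; the hypothesis then demands $v_p(A^n - B^n) \geq en$ for every $n$. Factoring
\[
A^n - B^n = (A-B)\bigl(A^{n-1} + A^{n-2}B + \cdots + B^{n-1}\bigr)
\]
and using $A \equiv B \pmod p$, each summand in the second factor is congruent to $A^{n-1}$ modulo $p$, so the factor itself is congruent to $n A^{n-1} \pmod p$. Whenever $\gcd(n,p)=1$, this is a unit modulo $p$ (since $\gcd(A,p)=1$ as well), and therefore $v_p(A^n - B^n) = v_p(A-B)$, a constant depending only on $\alpha$, $\beta$, and $p$.

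To close the argument, I would choose a prime $\ell$ with $\ell \nmid q$ and $\ell > v_p(A-B)/e$. The preceding step gives $v_p(A^\ell - B^\ell) = v_p(A-B) < e\ell$, contradicting the required lower bound $v_p(A^\ell - B^\ell) \geq e\ell$. The only real technical ingredient is the congruence $A^{n-1}+\cdots+B^{n-1} \equiv nA^{n-1}\pmod p$, but this is immediate from $A\equiv B \pmod p$ applied termwise, so no appeal to a lifting-the-exponent style lemma is needed; the main conceptual point is simply that for $n$ coprime to $p$ the valuation collapses to a constant, and a constant cannot keep pace with the linearly growing requirement $en$.
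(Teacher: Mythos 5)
Your proof is correct and follows essentially the same route as the paper's: both clear denominators to reduce to a divisibility statement, both factor $A^n-B^n=(A-B)\bigl(A^{n-1}+\cdots+B^{n-1}\bigr)$, both rest on the congruence $A^{n-1}+\cdots+B^{n-1}\equiv nA^{n-1}$ modulo the relevant prime, and both derive the contradiction from the fact that the required power of the denominator grows with $n$ while the divisibility of $A-B$ cannot. Your $p$-adic-valuation packaging (fixing one prime $p\mid q$ and taking the exponent coprime to $p$) is a slightly cleaner variant of the paper's choice of a prime exponent exceeding every prime factor of $z$, and it makes the key coprimality step fully explicit rather than asserted.
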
 

\begin{proof}
  The assumption that $\alpha$ and $\beta$ are two distinct rational numbers
  such that $\alpha^n - \beta^n \in \mathbb Z$ is equivalent to saying that
  there exists distinct integers $x$, $y$, and $z$ such that $|z| \neq 1$,
  $\gcd(x,y,z) = 1$, and
\[
\frac{x^n - y^n}{z^n} \in \mathbb Z. 
\]
Since $\gcd(x,y,z) = 1$, we can divide by any common divisor of $x$ and $y$
until we reach $\gcd(x,y) = 1$. Now for $n=1$, we get $z| (x-y)$. Let $p$ be
an arbitrary prime greater than the largest prime divisor of $z$, and define
\[
P_p(x,y) = \frac{x^p-y^p}{x-y} = \sum_{i=0}^{p-1}x^{p-i-1}y^i.
\]
Then $\gcd(z^p,p) = 1$ and $P_p(x,y)$ yields only integers. Since $\gcd(x,y) =
1$ and $p$ is prime, using elementary number theory it can be easily shown
that $\gcd(x-y,P_p(x,y))$ is either $1$ or $p$. It follows that
$\gcd(z,P_p(x,y)) = 1$. This coupled with the fact that $x^p-y^p =
(x-y)P_p(x,y)$ implies that $z^p | (x^p-y^p)$ if and only if $z^p|(x-y)$. But
this forces $x$ to be equal to $y$, a contradiction of the assumption that
$x$ and $y$ are distinct\footnote{It turned out that if $x=z^m+1$ and $y=1$,
  then $(x^n-y^n)/z^n$ is an integer for $n \leq m$, where is $m$ is any
  positive integer. If $m$ is prime then this is the smallest value of $x$
  that yields a solution for $n \leq m$, assuming that $x$ and $y$ are both
  positive. It is only when $m$ is allowed to go to infinity that a solution
  cannot be found.  In fact, it can be shown that one cannot find a positive
  integer $N$ and rational numbers $\alpha$ and $\beta$ such that $\alpha^n -
  \beta^n \in \mathbb Z$ for $n > N$, no matter how large $N$ is.}.
\end{proof}

Having shown that there are no nontrivial integer sequences of the form
$\alpha^n - \beta^n$, it should be mentioned that the same is not true for
sequences of the form 
\[
\frac{\alpha^n-\beta^n}{\alpha-\beta}.
\]
In particular, if we choose $\alpha = r$ and $\beta = s$, then a
Fibonacci-like sequence is generated. More generally, we have the following
result.
\begin{theorem} \label{Thm:Diff}
  If $\alpha = y+r$ and $\beta = y+s$, then $(\alpha^m -
  \beta^m)/(\alpha-\beta)$ is an integer for every positive integer $m$.
\end{theorem}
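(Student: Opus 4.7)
The plan is to expand $\alpha^m - \beta^m$ via the binomial theorem, observe that $\alpha - \beta = r - s$ divides each term of the expansion, and then reduce the claim to the integrality of the quotient sequence $V_i := (r^i - s^i)/(r-s)$. Specifically, since
\[
\alpha^m - \beta^m = (y+r)^m - (y+s)^m = \sum_{i=0}^{m} \binom{m}{i} y^{m-i}(r^i - s^i)
\]
and $\alpha - \beta = r - s$, dividing yields
\[
\frac{\alpha^m - \beta^m}{\alpha - \beta} = \sum_{i=0}^{m} \binom{m}{i} y^{m-i} V_i.
\]
Assuming $y \in \mathbb Z$ (implicit in the hypothesis, in analogy with the role of $m$ in (\ref{Main:Sum})), every factor outside $V_i$ is already an integer, so the entire claim reduces to showing $V_i \in \mathbb Z$ for $i \geq 0$.

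The key step is therefore this integrality assertion. The plan is to exploit the fact that $r$ and $s$ satisfy (\ref{Main:Eqn}), so $r^{i+1} = p\, r^i + q\, r^{i-1}$ and the same identity holds with $s$ in place of $r$. Subtracting these and dividing by $r-s$ gives the linear recurrence
\[
V_{i+1} = p\, V_i + q\, V_{i-1},
\]
with initial values $V_0 = 0$ and $V_1 = 1$. Since $p, q \in \mathbb Z$, a one-line induction then shows $V_i \in \mathbb Z$ for every $i \geq 0$. Note that although $r-s = \sqrt{p^2+4q}$ need not itself be rational, the quotients $V_i$ always are integers; this is the only genuinely nontrivial point in the argument.

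Combining the displayed sum with the integrality of each $V_i$ completes the proof. I do not anticipate any real obstacle: the proof runs on the same binomial-expansion-plus-recurrence template that produced (\ref{Main:Sum}), and unlike Theorem \ref{Main:Thm} it requires no combinatorial identity in the spirit of (\ref{IdenL}). A minor stylistic decision will be whether to introduce the sequence $V_i$ explicitly (emphasizing the Fibonacci-like recurrence alluded to in the paragraph preceding the theorem) or to fold the induction invisibly into the computation; the former seems preferable since it connects the proof to the Lucas/Fibonacci specializations promised in the abstract.
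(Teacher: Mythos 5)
Your proof is correct, and the first half (the binomial expansion reducing the claim to the integrality of $V_n=(r^n-s^n)/(r-s)$, with $y$ understood to be an integer) is exactly the paper's opening move. Where you genuinely diverge is in the key integrality step. The paper re-runs the machinery of Theorem~\ref{Main:Thm}: it expands $(r^{n+1}-s^{n+1})/(r-s)$ directly, rearranges the double sum, and invokes a binomial-coefficient identity (verified via \emph{Mathematica} and the $\Gamma$-function) to arrive at the closed form $V_{n+1}=\sum_{k=0}^{\lfloor n/2\rfloor}\binom{n-k}{k}p^{n-2k}q^k$, whose integrality is manifest. You instead derive the three-term recurrence $V_{i+1}=pV_i+qV_{i-1}$ from the minimal polynomial of $r$ and $s$, with $V_0=0$, $V_1=1$, and induct. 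Your route is shorter, fully self-contained (no computer-algebra step), and sidesteps any combinatorial identity; it is essentially the inductive approach the paper's footnote attributes to Draim and Bickell for Theorem~\ref{Main:Thm}, and the author deliberately avoids it because the direct expansion is what produces the explicit polynomial formula~(\ref{IdenF}) in $p$ and $q$ --- the analogue of~(\ref{Exp2}) --- which is then exploited in Section~3 for Fibonacci identities and for the connection to Chebyshev polynomials of the second kind. So the trade-off is: your argument buys economy and rigor without external computation; the paper's buys an explicit closed form of independent use. One point both treatments leave implicit: when $p^2+4q=0$ one has $r=s$ and hence $\alpha=\beta$, so the quotient must be interpreted as a limit (or that case excluded) for the statement to make sense.
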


\begin{proof}
  Since $\alpha - \beta = r-s$, the binomial theorem yields
\[
\frac{\alpha^m - \beta^m}{\alpha-\beta} = \sum_{n=0}^m
\binom{m}{n} y^{m-n} \, \frac{r^n - s^n}{r-s}, 
\]
which is clearly an integer if $(r^n - s^n)/(r-s)$ is an integer. Since
$r^0-s^0 =0$, we need only consider positive powers of $r$ and $s$. Now
following an argument similar to that used in Theorem~\ref{Main:Thm}, we get,
for $n \geq 0$,
\begin{align*}
  \frac{r^{n+1}-s^{n+1}}{r-s}& = \frac{1}{2^{n}} \sum_{i=0}^{\lfloor
    n/2 \rfloor} \binom{n+1}{2i+1} p^{n-2i} \, (p^2+4q)^i  \\
  & =\sum_{k=0}^{\lfloor n/2 \rfloor} \frac{1}{2^{n-2k}}
    \sum_{i=k}^{\lfloor n/2 \rfloor} \binom{n+1}{2i+1} \binom{i}{k}
    p^{n-2k}q^k.
\end{align*}
Again, we can use \emph{Mathematica} to simplify the coefficient of
$p^{n-2k}q^k$ in the last equation. This yields
\[
2^{2k-n} \, \sum_{i=k}^{\lfloor n/2 \rfloor} \binom{n+1}{2i+1}
\binom{i}{k} = \frac{\Gamma(n-k+1)}{\Gamma(k+1)\Gamma(n-2k+1)}. 
\]
Using the properties of $\Gamma$, the right-hand side of the last equation
can be written as 
\[
\frac{(n-k)!}{k! (n-2k)!} = \binom{n-k}{k}.  
\]
We conclude that
\begin{equation}
\frac{r^{n+1}-s^{n+1}}{r-s} = \sum_{k=0}^{\lfloor n/2 \rfloor} \binom{n-k}{k} 
p^{n-2k}q^k. \label{IdenF}
\end{equation}
It is clear that the right-hand side of (\ref{IdenF}) is always an integer,
and so the proof is complete.
\end{proof}

As before, if $(\alpha^n - \beta^n)/(\alpha-\beta)$ is an integer for every
$n$, then $\alpha$ and $\beta$ are the roots of a quadratic equation with
integer coefficients. This is so since for $n=2$, we get $\alpha + \beta \in
\mathbb Z$; while for $n=3$, we get $(\alpha + \beta)^2 - \alpha \beta \in
\mathbb Z$, which implies that $\alpha \beta$ is an integer.  Also, we can
express equation (\ref{IdenF}) in terms of Chebyshev polynomials. In this
case, we get
\[
\frac{r^{n+1}-s^{n+1}}{r-s} = p^n x^{-n} U_n(x/2), 
\]
where $U_n(x)$ is Chebyshev polynomial of the second kind of order $n$
defined by
\[
U_0(x) = 1, \quad U_1(x) = 2x, \quad \text{and} \quad U_{n+1}(x) = 2xU_n(x) -
U_{n-1}(x).
\]

\section{Special Cases}
We have proved that $r^n+s^n$ is an integer for every natural number $n$ if
and only if $r$ and $s$ are the roots of $z^2-pz-q=0$. Depending on the
values of $p$ and $q$, some of the resulting sequences are more interesting
than others. For instance, if $p=0$ then $r = \sqrt{q}$ and $s = -\sqrt{q}$.
In this case, $r^n + s^n$ is either zero (when $n$ is odd) or $2q^{n/2}$
(when $n$ is even). On the other hand, if $q=0$ then $r=p$ and $s=0$, and
thus $r^n+s^n = p^n$. So, suppose that both $p$ and $q$ are different from
zero. Then using the identity $T_n(\theta) = \cos (n \arccos \theta)$ one can
rewrite (\ref{Cheby1}) as
\begin{equation}
r^n + s^n = 2p^n x^{-n} \cos \left(n \arccos \frac{x}{2}
\right). \label{Cheby3} 
\end{equation}
Since $x = \sqrt{-p^2/q}$, we see that the \emph{simplest} sequences are
obtained when both $p$ and $q$ are equal to one in absolute value. 

First, we take $p=q=1$. This yields $r = \phi$ and $s =-\varphi$, where $\phi
= (1+\sqrt5)/2$ is the \emph{golden ratio} and $\varphi = 1/\phi$. Using
mathematical induction, one can easily show that $r^n + s^n = L_n$. This is
the well known \emph{Binet formula} for $n$-th Lucas number $L_n$. In fact,
the formula is a special case of a more general formula that, given $G_0$ and
$G_1$, calculates the $n$-th \emph{generalized} Fibonacci number defined for
$n \geq 2$ by
\[
G_n = pG_{n-1} + qG_{n-2},
\]
where $p$ and $q$ are arbitrary numbers (integers in our case). It turned out
that if $r$ and $s$ are distinct roots of $z^2-pz-q=0$, then
\[
G_n = \frac{(G_1-sG_0)r^n - (G_1-rG_0)s^n}{r-s}, 
\]
see \citet*{Niven}. In particular, if $p=q=1$ then it easily seen that for
$G_0=2$ and $G_1 =1$ we get
\[
G_n = \phi^n + (-\varphi)^n = L_n,
\]
while for $G_0=0$ and $G_1 =1$ we get 
\[
G_n = \frac{\phi^n - (-\varphi)^n}{\sqrt5} = F_n,
\]
where $F_n$ is the $n$-th Fibonacci number. More generally, if $G_0=0$ and
$G_1 =1$, then
\[
G_n = \frac{r^n - s^n}{r-s} 
\]
for any $p$ and $q$. On the other hand, if $G_0=2$ and $G_1 =1$, then
\[
G_n = r^n + s^n
\]
for any $q$, provided that $p=1$.  

Beside the identity $\phi^n + (-\varphi)^n = L_n$, we can use the steps of
Theorem~\ref{Main:Thm} to develop other formulas for $L_n$. For instance,
setting $p = q = 1$ in (\ref{Exp1}) yields
\[
L_n = \frac{1}{2^{n-1}} \sum_{i=0}^{\lfloor n/2 \rfloor} \binom{n}{2i} 5^i. 
\]
Doing the same in (\ref{Exp2}) we get
\[
L_n = \sum_{k=0}^{\lfloor n/2 \rfloor} \frac{n}{n-k}
\binom{n-k}{k}. 
\]

Next, we take $p=q=-1$. Then the zeros of the corresponding polynomial are 
\[
r = -\frac{1}{2} + \frac{i\sqrt{3}}{2} = \Lambda \qquad \text{and} \qquad
s = -\frac{1}{2} - \frac{i\sqrt{3}}{2} = \lambda. 
\]
Substituting $p=q=-1$ in (\ref{Cheby3}), we get $r^n+s^n = 2(-1)^n
\cos(n\pi/3)$. Starting with $n=1$, it is obvious that $r^n+s^n$ takes on the
cycle $\{-1, -1, 2\}$. More generally, if we let $q = -p^2$, then we obtain
\[
x = 1, \quad r = \Lambda p \quad \text{and} \quad  s = \lambda p, 
\]
and so $r^n + s^n = 2p^n \cos(n\pi/3)$. On the other hand, $q = p^2$ gives
\[
x = i, \quad r = \phi p \quad \text{and} \quad s = -\varphi p, 
\]
and so $r^n + s^n = L_np^n$. 

So far, we have taken $\alpha = r$ and $\beta = s$, which is equivalent to
setting $m=0$ in (\ref{Main:Sum}). But a whole new set of identities can
obtained by allowing $m$ to be different form zero. Suppose that we fix
$p=q=1$.  Then for $m=1$ we get $\alpha = 1+\phi = \phi^2$ and $\beta =
1-\varphi = \varphi^2$.  Hence, we have
\[
\alpha^n + \beta^n = (\phi^2)^n + (\varphi^2)^n = \phi^{2n} + \varphi^{2n} =
L_{2n}.  
\]
Now setting $r=\phi$, $s=-\varphi$ and $m=1$ in the right-hand side of
(\ref{Main:Sum}) we get
\[
\sum_{i=0}^n \binom{n}{i} \left( \phi^i + (-\varphi)^i \right) = \sum_{i=0}^n
\binom{n}{i} L_i = L_{2n}.
\]
Moreover, since $1+\phi$ and $1-\varphi$ are the zeros of $z^2-3z+1$,
substituting $p=3$ and $q=-1$ in (\ref{Exp1}) we obtain
\[
L_{2n} = \frac{3^n}{2^{n-1}} \sum_{i=0}^{\lfloor n/2 \rfloor}
\left(\frac{5}{9}\right)^i \binom{n}{2i}.
\]
Doing the same in (\ref{Exp2}) yields 
\[
L_{2n} = \sum_{k=0}^{\lfloor n/2 \rfloor} (-1)^k \, 3^{n-2k} \, \frac{n}{n-k}
\binom{n-k}{k}.  
\]
Similarly, for $m=-1$, we get $\alpha = -1+\phi = \varphi$ and $\beta =
-1-\varphi = -\phi$. It follows that
\[
\alpha^n + \beta^n = (-1)^n (\phi^n + (-\varphi)^n) = (-1)^n L_n.
\]
But letting $m=-1$ in (\ref{Main:Sum}) gives 
\[
\varphi^n + (-\phi)^n = (-1)^n L_n = \sum_{i=0}^n \binom{n}{i} (-1)^{n-i} 
L_i. 
\]
Multiplying both sides of the last equation by $(-1)^n$, we deduce that 
\[
\sum_{i=0}^n (-1)^i \binom{n}{i} L_i = L_n. 
\]
Since $\binom{n}{n} = 1$, we obtain 
\[
\sum_{i=0}^{n-1} (-1)^i \binom{n}{i} L_i =
\begin{cases}
0,& \text{if $n$ is even}; \\
2L_n,& \text{if $n$ is odd}. 
\end{cases}
\]

Next, we look at $m=\pm2$. For $m=2$, we get $\alpha = 1+\phi^2 =
\sqrt{5}\phi$ and $\beta = 1+\varphi^2= \sqrt{5}\varphi$. It follows that
\[
\alpha^n + \beta^n = (\sqrt{5}\phi)^n + (\sqrt{5}\varphi)^n = 
\begin{cases} 
  5^kL_{n},& \text{if $n = 2k$};  \\
  5^{k+1}F_{n},& \text{if $n = 2k+1$}. 
\end{cases}
\]
This is so since 
\[
L_{2k} = \phi^{2k} + \varphi^{2k} \quad \text{and} \quad F_{2k+1} =
\frac{\phi^{2k+1} + \varphi^{2k+1}}{\sqrt{5}}.  
\]
Alternatively, setting $\alpha = 1+\phi^2$ and $\beta = 1+\varphi^2$ in
(\ref{Main:Sum}) we obtain
\[
\alpha^n + \beta^n = \sum_{i=0}^{n} \binom{n}{i} \left(\phi^{2i} +
\varphi^{2i}\right) =  \sum_{i=0}^{n} \binom{n}{i} L_{2i}. 
\]
This leads to the known identity 
\[
\sum_{i=0}^{n} \binom{n}{i} L_{2i} 
= 
\begin{cases} 
  5^kL_{n}& \text{if } n = 2k  \\
  5^{k+1}F_{n}& \text{if } n = 2k+1,
\end{cases}
\]
which is proved in \citet*{Vajda}. Since $\alpha = 2+\phi$ and $\beta
=2-\varphi$ are the zeros of $z^2-5z+5$, using (\ref{Exp1}) and (\ref{Exp2})
we respectively get
\[
\sum_{i=0}^{n} \binom{n}{i} L_{2i} = \frac{5^n}{2^{n-1}} \sum_{i=0}^{\lfloor
n/2 \rfloor} \frac{1}{5^i} \binom{n}{2i} = \sum_{k=0}^{\lfloor n/2 \rfloor}
(-1)^k \, 5^{n-k} \, \frac{n}{n-k} \binom{n-k}{k}.  
\]
As for $m=-2$, we obtain $\alpha = -2+\phi = -\varphi^2$ and $\beta = -2-\varphi
= -\phi^2$. Therefore, 
\[
\alpha^n + \beta^n = (-\varphi^2)^n + (-\phi^2)^n = (-1)^n (\varphi^{2n} +
\varphi^{2n}) = (-1)^nL_{2n}.  
\]
Using (\ref{Main:Sum}) we get  
\[
\alpha^n + \beta^n = \sum_{i=0}^{n} (-2)^{n-i} \binom{n}{i} L_i =
(-1)^nL_{2n}.   
\]

Continuing in this way, one can obtain a myriad of formulas involving Lucas
numbers. Moreover, using Theorem \ref{Thm:Diff}, similar results involving
Fibonacci numbers can be obtained as well. In addition, by expressing the
roots of (\ref{Main:Eqn}) in different forms, new sets of identities will
emerge. For example, if $\alpha$ and $\beta$ are the roots of $z^2-z-1=0$,
then they can be written as $\alpha = m+r$ and $\beta = m+s$, where $m=2$ and
$r$ and $s$ are the roots of $z^2+3z+1=0$. Since the roots of first equation
are $\phi$ and $-\varphi$ and those of the second equation are $-\varphi^2$
and $-\phi^2$, we get $(2-\varphi^2)^n + (2-\phi^2)^n = \phi^n + (-\varphi)^n
= L_n$. But substituting for $\alpha$ and $\beta$ in (\ref{Main:Sum}) we
obtain
\[
(2-\varphi^2)^n + (2-\phi^2)^n = \sum_{i=0}^n (-1)^i 2^{n-i}
\binom{n}{i} L_{2i} = L_n. 
\]
Other types of interesting identities can also be deduced. For instance,
setting $k=0$ in (\ref{IdenL}) yields
\begin{equation}
2^{1-n} \sum_{i=0}^{\lfloor n/2 \rfloor} \binom{n}{2i} = 1 \qquad \text{or}
\qquad \sum_{i=0}^{\lfloor n/2 \rfloor} \binom{n}{2i} =
2^{n-1}. \label{Spec1} 
\end{equation}
Similarly, when $k=1$ and $n > 1$, we get
\begin{equation}
2^{3-n} \sum_{i=1}^{\lfloor n/2 \rfloor} i\binom{n}{2i} = n \qquad \text{or}
\qquad \frac{4}{n} \sum_{i=0}^{\lfloor n/2 \rfloor} i\binom{n}{2i} =
2^{n-1}. \label{Spec2}
\end{equation}
Equating (\ref{Spec1}) with (\ref{Spec2}) we obtain
\[
n\sum_{i=0}^{\lfloor n/2 \rfloor} \binom{n}{2i} = 4
\sum_{i=0}^{\lfloor n/2 \rfloor} i\binom{n}{2i}. 
\]

\bibliographystyle{plainnat}

\end{document}